\DeclareMathOperator{\cid}{CI-dim}
\DeclareMathOperator{\curv}{curv}
\DeclareMathOperator{\cx}{cx}
\DeclareMathOperator{\depth}{depth}
\DeclareMathOperator{\Ext}{Ext}
\DeclareMathOperator{\gdim}{G-dim}
\DeclareMathOperator{\injdim}{injdim}
\DeclareMathOperator{\pd}{pd}
\DeclareMathOperator{\rank}{rank}
\DeclareMathOperator{\Tor}{Tor}
\renewcommand{\ge}{\geqslant}
\renewcommand{\le}{\leqslant}
\newcommand{\fm}{\mathfrak{m}}
\newcommand{\fn}{\mathfrak{n}}
\renewcommand{\iff}{if and only if }
\theoremstyle{plain}
\newtheorem{theorem}{Theorem}[section]
\newtheorem{lemma}[theorem]{Lemma}
\newtheorem{corollary}[theorem]{Corollary}
\theoremstyle{definition}
\newtheorem{definition}[theorem]{Definition}
\newtheorem{example}[theorem]{Example}
\newtheorem{para}[theorem]{}
\newtheorem{setup}[theorem]{Setup}
\theoremstyle{remark}
\newtheorem{remark}[theorem]{Remark}
\numberwithin{equation}{section}
\title[Integrally closed $\fm$-primary ideals have extremal resolutions]{Integrally closed $\fm$-primary ideals have extremal resolutions}
\author[D.~Ghosh]{Dipankar Ghosh}
\address{Department of Mathematics, Indian Institute of Technology Kharagpur, West Bengal - 721302, India}
\email{dipankar@maths.iitkgp.ac.in, dipug23@gmail.com}
\author[T.J.~Puthenpurakal]{Tony J. Puthenpurakal}
\address{Department of Mathematics, Indian Institute of Technology Bombay, Powai, Mumbai 400076, India}
\email{tputhen@math.iitb.ac.in}
\date{August 30, 2022}
\subjclass[2010]{Primary 13B22, 13D02, 13D05, 13H10} %; Secondary 13D02
\keywords{Integrally closed ideals; Various local rings; Complexity; Curvature; Homological dimensions}
\begin{document}

\pagenumbering{arabic}
\thispagestyle{empty}
  
 \begin{abstract}
 	 We show that every integrally closed $\fm$-primary ideal $I$ in a commutative Noetherian local ring $(R,\fm,k)$ has maximal complexity and curvature, i.e., $ \cx_R(I) = \cx_R(k) $ and $ \curv_R(I) = \curv_R(k) $. As a consequence, we characterize complete intersection local rings in terms of complexity, curvature and complete intersection dimension of such ideals. The analogous results on projective, injective and Gorenstein dimensions are known. However, we provide short proofs of these results as well.
 \end{abstract}

\maketitle

\section{Introduction}

The notions of complexity and curvature were introduced by Avramov. They measure the growth of Betti numbers, and distinguish the modules of infinite projective dimension. The aim of this article is to study complexity and curvature of certain integrally closed ideals, and characterize complete intersection local rings in terms of complexity, curvature and complete intersection dimension of such ideals.

\begin{setup}\label{setup}
	Throughout, $(R,\fm,k)$ is a commutative Noetherian local ring. Let $I$ be an integrally closed $\fm$-primary ideal of $R$.
\end{setup}

It is shown in \cite[Cor.~3.3]{CHKV06} by Corso-Huneke-Katz-Vasconcelos that $I$ is rigid-test as an $R$-module, i.e., for every finitely generated $R$-module $N$, $\Tor_n^R(I,N)=0$ for some $ n \ge 1 $ implies that $ \Tor_{i \ge n}^R(I,N) = 0 $ and $\pd_R(N) \le n$. Over a complete intersection local ring, Celikbas-Dao-Takahashi \cite[Thm.~1.4]{CDT14} showed that the test modules are precisely the modules of maximal complexity. It follows from the above two results that $I$ has maximal complexity when $R$ is a complete intersection local ring. In the present article, we prove this result over an arbitrary Noetherian local ring. More generally, in Theorem~\ref{thm:int-closed-ideal-max-cx-curv}, we show that $I$ has maximal complexity as well as curvature, i.e., $ \cx_R(I) = \cx_R(k) $ and $ \curv_R(I) = \curv_R(k) $. See \ref{def:complexity} and \ref{para:max-cx-curv} for the definitions of (maximal) complexity and curvature.

Theorem~\ref{thm:int-closed-ideal-max-cx-curv} provides a new class of ideals, namely integrally closed $\fm$-primary ideals, having maximal complexity and curvature. In this context, it is known that every nonzero homomorphic image of a finite direct sum of syzygy modules of the residue field $k$ has maximal complexity and curvature, due to Avramov \cite[Cor.~9]{Avr96}. Every Ulrich module over a Cohen-Macaulay local ring also has maximal complexity and curvature, see \cite[Thm.~3.7]{DG22}.

The main motivation for Theorem~\ref{thm:int-closed-ideal-max-cx-curv} came from the following results. Burch \cite[pp~947, Cor.~3]{Bu68} proved that $R$ is regular \iff projective dimension $\pd_R(I)$ is finite. We recover this result as a corollary of Theorem~\ref{thm:int-closed-ideal-max-cx-curv}, see Corollary~\ref{cor:Burch}. The counterpart of Burch's result for Gorenstein dimension is shown by Celikbas--Sather-Wagstaff in \cite[Thm.~1.1]{CS16}. The analogous result for injective dimension is also known by \cite[6.12]{CGSZ18}. As a consequence of Theorem~\ref{thm:int-closed-ideal-max-cx-curv}, in Corollary~\ref{cor:int-closed-ideal-max-cx-curv}, not only we obtain the counterpart of all these results for complete intersection dimension, but also we strengthen this to complexity and curvature of such ideals. We show that the following are equivalent: (1) $R$ is complete intersection, (2) $\cid_R(I) < \infty$ (where CI-dim denotes complete intersection dimension), (3) $\cx_R(I) < \infty$, and (4) $\curv_R(I) \le 1$. Note that for a finitely generated $R$-module $M$, the implications $(\ast)$ and $(\ast\ast)$ in the diagram below hold true, see \cite[(5.3)]{AGP97} and \cite[4.2.3.(4)]{Avr98}.
\begin{displaymath}
\xymatrix{
\cid_R(M) < \infty \ar@/{}^{1.3pc}/@{=>}[r]^{(\ast)} & \cx_R(M) < \infty \ar@/{}^{1.3pc}/@{=>}[l]_{(\dagger)} \ar@/{}^{1.3pc}/@{=>}[r]^{(\ast\ast)} & \curv_R(M) \le 1 \ar@/{}^{1.3pc}/@{=>}[l]_{(\dagger\dagger)} }
\end{displaymath}
The implication $(\dagger)$ is not true in general. In fact, there are examples \cite[(3.3)]{GP90} of modules over Artinian local rings having periodic resolution with minimal period $q > 2$. Such modules cannot have finite CI-dimension, cf.~\cite[Thm.~7.3.(1)]{AGP97}. As far as we know, $(\dagger\dagger)$ is open in general, cf.~\cite[Prob.~4.3.7]{Avr98}. However, both $(\dagger)$ and $(\dagger\dagger)$ hold true for modules with maximal complexity and curvature, see, e.g., \cite[Thm.~3]{Avr96} and \cite[(1.3)]{AGP97}.

Finally, we give different and short proofs of the counterparts of Burch's result for injective and Gorenstein dimensions, see Theorems~\ref{thm:int-closed-ideal-finite-injdim} and \ref{thm:G-dim-of-I}.

\section{Results on complexity and curvature, and their consequences}

%For the basics of integrally closed ideals, we refer the reader to \cite[Section~1.1]{SH06}.
Throughout, we work with Setup~\ref{setup}. All $R$-modules are assumed to be finitely generated. For an $R$-module $M$, let $ \beta_n^R(M) $ denote the $n$th Betti number of $M$, i.e., $ \beta_n^R(M) = \rank_k\left( \Ext_R^n(M,k) \right) $.

\begin{definition}\label{def:complexity}%\cite[(3.1)]{Avr89}
	(1) The complexity of $ M $, denoted $ \cx_R(M) $, is the smallest non-negative integer $ b $ such that there exists a real number $ \alpha > 0 $ with $ \beta_n^R(M) \le \alpha n^{b-1} $ for all $ n \gg 0 $. If no such $ b $ exists, then $ \cx_R(M) := \infty $.
	
	(2) The curvature of $ M $, denoted $ \curv_R(M) $, is the reciprocal value of the radius of convergence of the Poincar\'{e} series $ P_M^R(t) := \sum_{n \ge 0} \beta_n^R(M) t^n $ of $ M $, i.e.,%\cite[p.~320]{Avr96}
	\[
		\curv_R(M) := \limsup_{n \to \infty} \sqrt[n]{\beta_n^R(M)}.
	\]
	Note that $ \curv_R(M) $ is always finite (cf.~\cite[4.2.3.(5)]{Avr98}).
\end{definition}

We refer the reader to \cite[Section~4.2]{Avr98} for the basic properties of complexity and curvature. In our study, we particularly use the following results.

\begin{para}\label{para:max-cx-curv}
	It is known that the residue field has maximal complexity and curvature, i.e.,
	\begin{align*}
	\cx_R(k) & = \sup\{ \cx_R(M) : M \mbox{ is an $ R $-module} \} \,\mbox{ and}\\
	\curv_R(k) & = \sup\{ \curv_R(M) : M \mbox{ is an $ R $-module} \},
	\end{align*}
	see \cite[4.2.4.(1)]{Avr98}.
%	\cite[Prop.~2]{Avr96}. 
	So an $ R $-module $ M $ is said to have maximal complexity (resp., curvature) if $ \cx_R(M) = \cx_R(k) $ (resp., $ \curv_R(M) = \curv_R(k) $).
\end{para}

\begin{para}\label{para:cx-curv-direct-sum}
	For $R$-modules $M$ and $N$, since $\beta_n^R(M \oplus N) = \beta_n^R(M) + \beta_n^R(N)$ for every $n \ge 0$, the definitions of complexity and curvature yield that
	\begin{align*}
	\cx_R(M \oplus N) &= \max\{ \cx_R(M), \cx_R(N) \} \quad \mbox{and}\\
	\curv_R(M \oplus N) &= \max\{ \curv_R(M), \curv_R(N) \} \quad \mbox{(cf.~\cite[4.2.4.(3)]{Avr98})}.
	\end{align*}
\end{para}

In order to prove our main results, when $\depth(R) = 0$, we replace $R$ by $R[X]_{\langle \fm, X\rangle}$, and assume that $\depth(R) \ge 1$.

\begin{lemma}\label{lem:S-R[[X]]}
	Set $S:=R[X]_{\langle \fm, X\rangle}$. Let $J$ be an ideal of $R$. Suppose $\fn$ is the maximal ideal of $S$. Then $X \in \fn\smallsetminus\fn^2$ is $S$-regular, and $S/XS \cong R$. Moreover,
	\begin{enumerate}[{\rm (1)}]
		\item $(JS+XS)/X(JS+XS) \cong J \oplus R/J$ as modules over $R$.
		\item $\cx_S(JS+XS) = \cx_R(J)$ and $\curv_S(JS+XS) = \curv_R(J)$.
		\item The ideal $J$ of $R$ is of maximal complexity $($resp., curvature$)$ \iff the ideal $JS+XS$ of $S$ is so.
		\item The ideal $J$ is integrally closed $($resp., $\fm$-primary$)$ in $R$ \iff $JS+XS$ is integrally closed $($resp., $\fn$-primary$)$ in $S$.
	\end{enumerate}
\end{lemma}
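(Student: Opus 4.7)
The plan is to reduce every assertion to a computation modulo $X$, using the canonical identification $S/XS = R$. The preliminary claims are standard: $X$ is a non-zerodivisor in $R[X]$ and hence in the localization $S$; since $R$ is already local at $\fm$, the quotient $S/XS$ equals $R[X]/(X)$ localized at its maximal ideal, namely $R_\fm = R$; and $X\in\fn\smallsetminus\fn^2$ follows from the decomposition $\fn/\fn^2\cong\fm/\fm^2\oplus k\overline{X}$ afforded by the faithful flatness of $R\to S$.

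The technical key behind (1) and (4) is the identity $JS\cap XS = XJS$, which I would prove using the $R$-module splitting $S = R\oplus XS$ induced by the projection $\pi\colon S\twoheadrightarrow R$: every element of $S$ writes uniquely as $s = s(0) + X\tilde{s}$ with $s(0)\in R$ and $\tilde{s}\in S$. Given $\sum_i j_is_i = Xg$ with $j_i\in J$, decomposing each $s_i = r_i + X\tilde{s}_i$ and separating the $R$-part from the $XS$-part forces $\sum_i j_ir_i = 0$ in $R$ and, after cancelling the regular element $X$, $g = \sum_i j_i\tilde{s}_i\in JS$. Armed with this, (1) becomes formal: define $\varphi\colon I/XI\to J\oplus R/J$ by $a+Xb+XI\mapsto(\overline{a},\overline{b})$, with $\overline{a}\in JS/XJS\cong J$ and $\overline{b}\in S/(JS+XS)\cong R/J$. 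The identity $JS\cap XS = XJS$ is exactly what makes $\varphi$ well defined, and both surjectivity and injectivity are then immediate (the latter using $XI = XJS + X^2S$).

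For (2), I would invoke the standard principle that if $x\in\fn$ is $S$-regular and $N$-regular, then a minimal $S$-free resolution of $N$ reduces modulo $x$ to a minimal $S/xS$-free resolution of $N/xN$, so $\beta_i^S(N) = \beta_i^{S/xS}(N/xN)$ for all $i\ge 0$. Applied to $N = I$ (which is $X$-torsion-free as a submodule of $S$), this yields $\beta_i^S(I) = \beta_i^R(I/XI) = \beta_i^R(J) + \beta_i^R(R/J)$ by (1). The short exact sequence $0\to J\to R\to R/J\to 0$ together with $\beta_i^R(R) = 0$ for $i\ge 1$ gives $\beta_i^R(R/J) = \beta_{i-1}^R(J)$ for $i\ge 2$, so the Betti sequences of $J$ and $R/J$ share complexity and curvature over $R$, whence $\cx_S(I) = \cx_R(J)$ and $\curv_S(I) = \curv_R(J)$. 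Part (3) then follows at once from (2) combined with the well-known formula $P_k^S(t) = (1+t)P_k^R(t)$, which gives $\cx_S(k) = \cx_R(k)$ and $\curv_S(k) = \curv_R(k)$.

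For (4), both statements reduce modulo $X$ via the correspondence between $S$-ideals containing $XS$ and $R$-ideals. For the $\fn$-primary part, $\sqrt{JS+XS} = \sqrt{J}\cdot S + XS$ (its image in $S/XS = R$ is $\sqrt{J}$), so $JS+XS$ is $\fn$-primary iff $\sqrt{J} = \fm$. For integral closure, I would establish $\overline{JS+XS} = \overline{J}\cdot S + XS$: the inclusion $\supseteq$ is immediate from lifting integral equations from $R$ to $S$, while for $\subseteq$ any integral equation $f^n + a_1f^{n-1} + \cdots + a_n = 0$ with $a_i\in(JS+XS)^i$ reduces modulo $X$ to an integral equation for $\overline{f}$ over $J$, giving $\overline{f}\in\overline{J}$ and hence $f\in\overline{J}+XS$. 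Thus $JS+XS$ is integrally closed iff $\overline{J}\subseteq J$, i.e., iff $J$ is integrally closed. The main obstacle is the bookkeeping around the identity $JS\cap XS = XJS$, which underlies both the description of $I/XI$ and the reductions in (4); once it is in hand, everything else is formal.
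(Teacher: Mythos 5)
Your proof is correct, and its overall strategy is the same as the paper's: everything is reduced modulo the $S$-regular element $X$, with part (1) as the key computation, Betti-number invariance giving (2), the standard behaviour of $P_k$ under factoring out a regular element in $\fn\smallsetminus\fn^2$ giving (3), and reduction of integral equations handling (4). Two steps are executed differently, and both variants are sound. For (1), the paper computes before localizing, writing $(JR[X]+XR[X])/(XJR[X]+X^2R[X])$ as $(J\oplus XR\oplus X^2R[X])/(0\oplus XJ\oplus X^2R[X])$ and noting the localization $T^{-1}$ is harmless; you instead work directly in $S$ via the $R$-module splitting $S=R\oplus XS$ and the identity $JS\cap XS=XJS$, which is a clean equivalent (your verification of that identity, separating the $R$-component and cancelling the regular element $X$, is exactly right). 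For (4), the paper quotes Swanson--Huneke (integral closedness of $\langle J,X\rangle$ in $R[X]$ plus persistence under localization) for one direction and uses a unit-cancellation argument for the converse, whereas you prove the self-contained formula $\overline{JS+XS}=\overline{J}S+XS$ by pushing integral equations through the surjection $S\to S/XS=R$; this buys a citation-free argument and gives both directions at once. A small bonus of your write-up is that you justify $\cx_R(J)=\cx_R(R/J)$ (and the curvature analogue) via the syzygy shift $\beta_i^R(R/J)=\beta_{i-1}^R(J)$ for $i\ge 2$, which the paper merely asserts.
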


\begin{proof}
	(1) As $R$-modules, we have the following isomorphisms:
	\begin{align*}
	\dfrac{JS+XS}{X(JS+XS)} &\cong T^{-1} \left(\dfrac{J R[X] + X R[X]}{XJR[X] + X^2 R[X]}\right) \quad \mbox{where $T:=R[X]\smallsetminus\langle \fm, X\rangle$}\\
	& \cong T^{-1} \left(\dfrac{J \oplus XR \oplus X^2 R[X]}{0\oplus XJ \oplus X^2 R[X]}\right) \cong J \oplus \frac{R}{J}.
	\end{align*}
	
	(2) For an $S$-module $N$, if $z$ is a regular element over both $S$ and $N$, then $\beta_n^S(N) = \beta_n^{S/zS}(N/zN)$ for every $n\ge 0$, see, e.g., \cite[p.~140, Lem.~2]{Mat86}. Therefore, since $X$ is regular over both $S$ and $JS+XS$, it follows that
	\begin{align*}
	\cx_S(JS+XS) &= \cx_R\big((JS+XS)/X(JS+XS)\big)\\
	&= \cx_R(J \oplus R/J) \quad \mbox{[by (1)]}\\
	&= \max\{ \cx_R(J), \cx_R(R/J) \} \quad \mbox{[by \ref{para:cx-curv-direct-sum}]}\\
	&= \cx_R(J) \quad \mbox{[as $\cx_R(J) = \cx_R(R/J)$]}.
	\end{align*}
	The equality $\curv_S(JS+XS) = \curv_R(J)$ can be obtained similarly.
	
	(3) Since $X \in \fn\smallsetminus\fn^2$ is $S$-regular, it is well known that
	\begin{center}
		$\cx_S(k) = \cx_{R}(k)$ and $\curv_S(k) = \curv_{R}(k)$,
	\end{center}
	see, e.g., \cite[3.3.5]{Avr98}.
%	or \cite[Lem.~3.3]{DG22}.
	So one obtains the desired assertions from (2).
	
	(4) When $J$ is integrally closed in $R$, by \cite[Prop.~1.3.5]{SH06}, the ideal $\langle J, X \rangle$ of $R[X]$ is integrally closed. Since integral closure commutes with localization (cf.~\cite[1.1.4]{SH06}), the ideal $JS+XS$ is integrally closed in $S$. For the converse, assume that $JS+XS$ is integrally closed in $S$. Let $r \in R$ be integral over $J$. The equation of integral dependence of $r$ over $J$ yields that $\frac{r}{1} \in \overline{JS+XS} = JS+XS$. It follows that there exists a unit $u$ of $R$ such that $ur \in J$, hence $r\in J$. So $J$ is also integrally closed in $R$. The statement on primary ideals follows from the fact that $\fm^n \subseteq J$ \iff $\fn^n = \fm^nS + XS \subseteq JS+XS $, where $n \ge 1$ is an integer.
\end{proof}

We use the following remark in Section~\ref{sec:injdim-G-dim}.

\begin{remark}\label{rmk:Lemma-G-dim}
	In Lemma~\ref{lem:S-R[[X]]}, $R$ is Gorenstein \iff $S$ is so. Moreover, in view of the results \cite[1.4.5, 1.2.7 and 1.2.9]{Cr00} on Gorenstein dimension, one has that
	\begin{align*}
	\gdim_S(JS+XS) &= \gdim_R\big((JS+XS)/X(JS+XS)\big)\\% \quad \mbox{[cf.~\cite[1.4.5 and ]{Cr00}]}\\
	&=\gdim_R\Big(J\oplus \frac{R}{J}\Big) = \max\{ \gdim_R(J), \gdim_R(R/J) \}.
	\end{align*}
\end{remark}

The main ingredients of the proof of our main result (Theorem~\ref{thm:int-closed-ideal-max-cx-curv}) is the use of $\fm$-full ideals and their properties.
%For readers convenience, we recall the notion of $\fm$-full ideals.

\begin{para}\label{para:m-full-ideals-properties}
	Let $J$ be an ideal of $R$.
	\begin{enumerate}[\rm (1)]
		\item The ideal $J$ is called $\fm$-full if $(\fm J :_R x) = J$ for some $x\in \fm$.
		\item Suppose that $J$ is integrally closed, and the residue field $k$ is infinite. Then, by \cite[Thm.~2.4]{Go87}, either $J$ is $\fm$-full or $J=\sqrt{0}$ (the nilradical of $R$). Moreover, if $\depth(R)>0$ and $J$ is $\fm$-full, from the proof of \cite[Thm.~2.4]{Go87}, the element $x$ can be chosen as a nonzero-divisor on $R$ such that $x \in \fm \smallsetminus \fm^2$ and $(\fm J :_R x) = J$.
		\item If $J=\sqrt{0}$, then $J^m = 0$ for some $m$. On the other hand, if $J$ is $\fm$-primary, then $\fm^l\subseteq J$ for some $l$. Thus, if $J=\sqrt{0}$ and $J$ is $\fm$-primary, then $\fm^n=0$ for some $n$, which implies that $\dim(R)=0$.
		\item \label{k-summand-of-I-mod-x}Let $\depth(R)>0$, and $k$ is infinite. Let $J$ be an $\fm$-primary integrally closed ideal of $R$. Then it follows from (2) and (3) above that there exists an $R$-regular element $x\in \fm \smallsetminus \fm^2$ such that $(\fm J :_R x) = J$. Hence, by \cite[Prop.~2.3.(5)]{AT05}, $k$ is a direct summand of $J/xJ$ as an $R/xR$-module.
	\end{enumerate}
\end{para}

Now we are in a position to prove our main result.

\begin{theorem}\label{thm:int-closed-ideal-max-cx-curv}
	With {\rm Setup~\ref{setup}}, the ideal $I$ has maximal complexity and curvature.
\end{theorem}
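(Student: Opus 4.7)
The plan is to exhibit the residue field $k$ (equivalently, its first syzygy $\fm$) as a direct summand of some syzygy of $I$; such a summand immediately forces $\beta_n^R(I)$ to dominate $\beta_n^R(k)$ up to a shift, which yields the lower bounds $\cx_R(I) \ge \cx_R(k)$ and $\curv_R(I) \ge \curv_R(k)$. The reverse inequalities are automatic from \ref{para:max-cx-curv}.

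First I reduce to the case $\depth(R) \ge 1$. If $\depth(R) = 0$, Lemma~\ref{lem:S-R[[X]]} lets me replace $(R,\fm,I)$ by $(S,\fn,J)$ with $S = R[X]_{\langle \fm, X\rangle}$ and $J = IS + XS$; parts (3)-(4) ensure that maximal complexity/curvature of $J$ over $S$ is equivalent to the same for $I$ over $R$, and $\depth(S) \ge 1$. If in addition $R$ is regular, every finitely generated $R$-module has complexity and curvature zero and the conclusion is vacuous, so I may also assume $R$ is singular.

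Next I would invoke the structural input for integrally closed $\fm$-primary ideals in positive depth, essentially due to Goto and the engine behind \cite{CHKV06}: one has $\fm \in \Ass_R(R/\fm I)$, equivalently $(\fm I :_R \fm) \supsetneq \fm I$. With this in hand Burch's lemma applies to $I$: combined with $\depth(R) \ge 1$ and $\pd_R(R/I) = \infty$ (which holds since $R$ is singular, by the original theorem of Burch recalled in the introduction), it produces a direct sum decomposition $\Omega_R^1(I) \cong \fm \oplus L$ for some finitely generated $R$-module $L$. Using $\beta_n^R(\Omega_R^1(I)) = \beta_{n+1}^R(I)$ and $\beta_n^R(\fm) = \beta_{n+1}^R(k)$, one obtains
\[
\beta_{n+1}^R(I) \ge \beta_n^R(\fm) = \beta_{n+1}^R(k) \quad \text{for all } n \ge 0,
\]
and hence $\cx_R(I) \ge \cx_R(k)$ and $\curv_R(I) \ge \curv_R(k)$.

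The principal obstacle is establishing the Goto-type assertion $\fm \in \Ass_R(R/\fm I)$ for an integrally closed $\fm$-primary ideal in a ring of positive depth, which rests on genuine facts about integral dependence and is the only place where the integral closedness of $I$ enters. Once this is at hand, the application of Burch's lemma, the reduction via Lemma~\ref{lem:S-R[[X]]}, and the comparison of Betti numbers are all routine.
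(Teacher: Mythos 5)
Your overall strategy---split $\fm$ (or $k$) off a syzygy of $I$ and compare Betti numbers---is a reasonable alternative to the paper's route, which instead uses Goto's theorem \cite{Go87} to get a nonzerodivisor $x \in \fm \smallsetminus \fm^2$ with $(\fm I :_R x) = I$ and then splits $k$ off $I/xI$ over $R/xR$ via \cite{AT05}. But the step you identify as ``the only place where the integral closedness of $I$ enters'' is vacuous, and the lemma you hang on it is false as stated. Indeed, $\fm \in \Ass_R(R/\fm I)$ is equivalent to $(\fm I :_R \fm) \supsetneq \fm I$, and this holds for \emph{every} nonzero ideal $I$: one always has $I \subseteq (\fm I :_R \fm)$, while $I \neq \fm I$ by Nakayama. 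So the condition uses neither integral closedness nor $\fm$-primariness, and a ``Burch's lemma'' of the shape you invoke (positive depth, $\pd_R(R/I)=\infty$, and $\fm \in \Ass_R(R/\fm I)$ imply $\Omega^1_R(I) \cong \fm \oplus L$) would apply to every nonzero ideal of infinite projective dimension over a singular ring of positive depth. The paper's own Example~\ref{exam-2:counter-corollaries} refutes this: for $J=(w)$ in the depth-one CM ring $R=k[[x,y,z,w]]/(xy,yz,zx,w^2)$ all of your hypotheses hold, yet $\Omega^1_R(J) \cong J$ is cyclic while $\fm$ is minimally four-generated, and $\cx_R(J)=1=\curv_R(J)$ is not maximal. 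So no argument from your stated hypotheses alone can succeed; the central structural claim is the gap.

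What is missing is the genuinely restrictive input that integral closedness provides: Goto's $\fm$-fullness $(\fm I :_R x) = I$ (after passing to an infinite residue field), or the Burch-type condition $\fm(I :_R \fm) \neq \fm I$ of Dao--Kobayashi--Takahashi---note this is \emph{not} the same as your colon condition. With one of these, a syzygy-splitting argument can in principle be carried out (this is essentially the Burch-ideal approach), but you would still need to prove or correctly quote the splitting statement in positive depth; observe that $k$ itself can never split off $\Omega^1_R(I)$ when $\depth R \ge 1$, since a nonzero submodule of a free module has positive depth, so the precise formulation requires care. The paper sidesteps all of this by reducing modulo the regular element $x$ coming from $\fm$-fullness: $k$ is a direct summand of $I/xI$ over $R/xR$, and Betti numbers, complexity and curvature are preserved under this reduction. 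Finally, invoking Burch's 1968 theorem to guarantee $\pd_R(R/I)=\infty$ is logically admissible but would render Corollary~\ref{cor:Burch} circular as part of a self-contained account, whereas the paper's proof does not need it.
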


\begin{proof}
	In view of Lemma~\ref{lem:S-R[[X]]}, without loss of generality, we may assume that $\depth(R) > 0$. If the residue field is finite, then passing through $R[X]_{\fm R[X]}$, we may also assume that $R$ has infinite residue field $k$. As $I$ is integrally closed and $\fm$-primary, by \ref{para:m-full-ideals-properties}.\eqref{k-summand-of-I-mod-x}, there exists an $R$-regular element $x\in \fm \smallsetminus \fm^2$ such that
	\begin{equation}\label{eqn:k-direct-summand-of-I-mod-x}
		\mbox{$k$ is a direct summand of $I/xI$ as an $R/xR$-module.}
	\end{equation}
	Since $x$ is $R$-regular, for all $ n\ge 0 $, the $n$th Betti number of $I/xI$ as an $R/xR$-module is same as that of $I$ as an $R$-module. It follows that
	\begin{equation}\label{equalities-cx-curv-mod-x}
	\cx_{R/xR}(I/xI)=\cx_R(I) \quad \mbox{and} \quad \curv_{R/xR}(I/xI)=\curv_R(I).
	\end{equation}
	Moreover, the following (in)equalities hold true:
	\begin{align}\label{inequality-cx-curv}
	\cx_R(k) & = \cx_{R/xR}(k) \quad \mbox{[see, e.g., \cite[Lem.~3.3]{DG22}]} \\
	& \le \cx_{R/xR}(I/xI) \quad \mbox{[by \eqref{eqn:k-direct-summand-of-I-mod-x} and Section~\ref{para:cx-curv-direct-sum}]} \nonumber\\
	& =\cx_R(I) \quad \mbox{[by \eqref{equalities-cx-curv-mod-x}]} \nonumber\\
	& \le \cx_R(k)\quad \mbox{[by Section~\ref{para:max-cx-curv}]}.\nonumber
	\end{align}
	Hence one concludes that $ \cx_R(I) = \cx_R(k) $. The similar (in)equalities as in \eqref{inequality-cx-curv} hold true for curvature, which yield that $ \curv_R(I) = \curv_R(k) $.
\end{proof}

The main application of Theorem~\ref{thm:int-closed-ideal-max-cx-curv} is the following.

\begin{corollary}\label{cor:int-closed-ideal-max-cx-curv}
	With {\rm Setup~\ref{setup}}, the following are equivalent:
	\begin{enumerate}[{\rm (1)}]
		\item $R$ is complete intersection.
		\item $\cid_R(I) < \infty$ $($equivalently, $\cid_R(R/I) < \infty$$)$.
		\item $\cx_R(I) < \infty$.
		\item $\curv_R(I) \le 1$.
	\end{enumerate}
\end{corollary}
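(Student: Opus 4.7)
The plan is to prove $(1)\Rightarrow(2)\Rightarrow(3)\Rightarrow(4)\Rightarrow(1)$, with the nontrivial input being Theorem~\ref{thm:int-closed-ideal-max-cx-curv} used in the last implication. For the parenthetical equivalence in (2), I would invoke the short exact sequence
\[
0 \lra I \lra R \lra R/I \lra 0,
\]
which makes $I$ a first syzygy of $R/I$ (no free summand since $I$ is $\fm$-primary and proper); complete intersection dimension is insensitive to syzygies up to shift, so $\cid_R(I)<\infty$ \iff $\cid_R(R/I)<\infty$.

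For $(1)\Rightarrow(2)$: if $R$ is complete intersection, then every finitely generated $R$-module has finite CI-dimension, so in particular $\cid_R(I)<\infty$. The implications $(2)\Rightarrow(3)$ and $(3)\Rightarrow(4)$ are exactly the arrows $(\ast)$ and $(\ast\ast)$ from the introduction (\cite[(5.3)]{AGP97} and \cite[4.2.3.(4)]{Avr98}); I would just cite these.

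The crux is $(4)\Rightarrow(1)$. Suppose $\curv_R(I)\le 1$. Theorem~\ref{thm:int-closed-ideal-max-cx-curv} gives $\curv_R(k)=\curv_R(I)\le 1$. It is a classical result of Gulliksen/Avramov (see, e.g., \cite[8.1.2]{Avr98}) that a Noetherian local ring $R$ is complete intersection if and only if $\curv_R(k)\le 1$ (equivalently, $\cx_R(k)<\infty$), so $R$ is complete intersection.

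There is no real obstacle here once Theorem~\ref{thm:int-closed-ideal-max-cx-curv} is in hand; the proof is a short bookkeeping argument that leverages the equality $\curv_R(I)=\curv_R(k)$ (or equivalently $\cx_R(I)=\cx_R(k)$) to transfer finiteness of an invariant of $I$ to finiteness of the same invariant of $k$, which is the defining condition for complete intersection.
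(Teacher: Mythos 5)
Your proof is correct and follows essentially the same route as the paper: $(1)\Rightarrow(2)$ and $(2)\Rightarrow(3)$ by citing \cite{AGP97}, and the key step transferring finiteness from $I$ to $k$ via Theorem~\ref{thm:int-closed-ideal-max-cx-curv} together with the classical characterization of complete intersections by $\cx_R(k)<\infty$ (equivalently $\curv_R(k)\le 1$). The only cosmetic difference is that you establish $(3)\Rightarrow(4)$ by the general implication $\cx<\infty\Rightarrow\curv\le 1$, whereas the paper folds $(3)\Rightarrow(4)\Rightarrow(1)$ into one application of the theorem; your added syzygy remark for the parenthetical equivalence in (2) is also fine.
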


\begin{proof}
	(1) $\Rightarrow$ (2): It follows from \cite[(1.3)]{AGP97}.
	
	(2) $\Rightarrow$ (3): If $ \cid_R(I) < \infty $, then $ \cx_R(I) < \infty $, cf.~\cite[(5.3)]{AGP97}.
	
	(3) $\Rightarrow$ (4) $\Rightarrow$ (1): One concludes these implications from Theorem~\ref{thm:int-closed-ideal-max-cx-curv} and the well known facts that $R$ is complete intersection \iff $\cx_R(k) < \infty$ \iff $\curv_R(k) \le 1$, see, e.g., \cite[8.1.2 and 8.2.2]{Avr98}.%\cite[Thm.~3]{Avr96}.
\end{proof}

\begin{remark}\label{rmk:CI-dim-I-R-CI}%{rmk:Tavanfar-result-our-result-comparison}
	Tavanfar, in \cite[Thm.~3.4.4]{Ta19}, showed that a test $R$-module has finite CI-dimension \iff $R$ is complete intersection. Thus the equivalence (1) $\Leftrightarrow$ (2) in Corollary~\ref{cor:int-closed-ideal-max-cx-curv} also follows from \cite[Cor.~3.3]{CHKV06} and \cite[Thm.~3.4.4]{Ta19}. We should note that the equivalences (1) $\Leftrightarrow$ (3) and (1) $\Leftrightarrow$ (4) considerably strengthen the equivalence (1) $\Leftrightarrow$ (2) in Corollary~\ref{cor:int-closed-ideal-max-cx-curv}.
	%	of (1), (3) and (4) does not follow from \cite[Cor.~3.3]{CHKV06} and \cite[4.2.4]{Ta19}
%	as for an $R$-module $M$,
%	\begin{center}
%		$\cid_R(M) < \infty$ $\stackrel{(*)}{\Longrightarrow}$ $\cx_R(M) < \infty$ $\stackrel{(**)}{\Longrightarrow}$ $\curv_R(M) \le 1$,
%	\end{center}
%	however the converses to $(*)$ and $(**)$ are not true in general.
\end{remark}

As another consequence of Theorem~\ref{thm:int-closed-ideal-max-cx-curv}, one recovers the result of Burch.

\begin{corollary}{\rm (Burch \cite[pp~947, Cor.~3]{Bu68})}\label{cor:Burch}
	With {\rm Setup~\ref{setup}},
	\begin{center}
		$R$ is regular $\Longleftrightarrow$ $ \pd_R(I) < \infty $ $($equivalently, $\pd_R(R/I) < \infty$$)$.
	\end{center}
\end{corollary}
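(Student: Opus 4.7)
The plan is to derive this directly from Theorem~\ref{thm:int-closed-ideal-max-cx-curv}, using the well-known characterization of regularity via vanishing complexity of the residue field.

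First I would dispose of the parenthetical equivalence between $\pd_R(I) < \infty$ and $\pd_R(R/I) < \infty$. Since $I$ is a proper ideal of $R$, the short exact sequence
\[
0 \longrightarrow I \longrightarrow R \longrightarrow R/I \longrightarrow 0
\]
yields $\pd_R(R/I) = \pd_R(I) + 1$ whenever either quantity is finite; in particular one is finite if and only if the other is.

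For the implication ``$R$ regular $\Rightarrow \pd_R(I) < \infty$'', I would invoke the Auslander--Buchsbaum--Serre theorem: over a regular local ring, every finitely generated module has finite projective dimension. For the converse, the key observation is that $\pd_R(I) < \infty$ forces $\beta_n^R(I) = 0$ for $n \gg 0$, hence $\cx_R(I) = 0$. By Theorem~\ref{thm:int-closed-ideal-max-cx-curv}, we then have $\cx_R(k) = \cx_R(I) = 0$, which means $\beta_n^R(k) = 0$ for $n \gg 0$, i.e., $\pd_R(k) < \infty$. Another application of Auslander--Buchsbaum--Serre gives that $R$ is regular.

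There is no real obstacle here; the entire content of Burch's theorem has already been absorbed into Theorem~\ref{thm:int-closed-ideal-max-cx-curv}, and the corollary is simply the extraction of the ``complexity zero'' case.
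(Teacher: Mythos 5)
Your proposal is correct and follows essentially the same route as the paper: finite $\pd_R(I)$ gives $\cx_R(I)=0$, Theorem~\ref{thm:int-closed-ideal-max-cx-curv} then forces $\cx_R(k)=0$, so $\pd_R(k)<\infty$ and $R$ is regular by Auslander--Buchsbaum--Serre. The extra details you supply (the forward direction and the equivalence with $\pd_R(R/I)<\infty$ via the short exact sequence $0\to I\to R\to R/I\to 0$) are the standard facts the paper leaves implicit.
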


\begin{proof}
	For an $R$-module $M$, one has that $\pd_R(M)$ is finite \iff $\cx_R(M) =0 $. Therefore, if $ \pd_R(I) $ is finite, then by Theorem~\ref{thm:int-closed-ideal-max-cx-curv}, $\cx_R(k) = \cx_R(I) = 0$, hence $\pd_R(k)$ is finite, which implies that $R$ is regular.
\end{proof}

We collect some examples which complement Theorem~\ref{thm:int-closed-ideal-max-cx-curv} and Corollary~\ref{cor:int-closed-ideal-max-cx-curv}.

\begin{example}
	An integrally closed $\fm$-primary ideal need not be the same as $\fm$ (it is when $R$ is Artinian). For example, consider $R = k[[x,y]]/(x^2,xy)$ over a field $k$, and let $I$ be the integral closure of $(y^2)$ in $R$. Since $I$ does not contain $y$, the ideal $I$ is not the same as $\fm = \langle x,y \rangle$. In fact $I = \langle x,y^2 \rangle = \langle x \rangle \oplus \langle y^2 \rangle \cong k \oplus \langle y^2 \rangle $. So $I$ has maximal complexity and curvature.
\end{example}

Theorem~\ref{thm:int-closed-ideal-max-cx-curv} and Corollary~\ref{cor:int-closed-ideal-max-cx-curv} do not necessarily hold true for arbitrary $\fm$-primary ideals which are not integrally closed.

\begin{example}\label{exam-1:counter-corollaries}
	Let $R=k[x,y,z]/(x^2,xy,y^2,z^2)$ over a field $k$. Set $J:=(z)$. Then $J$ is an $\fm$-primary ideal, but not integrally closed. Note that $R$ is an Artinian local ring, but not a complete intersection ring, hence $\cx_R(k) = \infty$ and $\curv_R(k) \ge 2$. Since $\beta_n^R(J)=1$ for all $n\ge 0$, one obtains that $\cx_R(J) = 1 = \curv_R(J)$. Thus $J$ does not have maximal complexity and curvature. Moreover, we notice that $\cid_R(R/J) = 0$. Indeed, consider the quasi-deformation $R \stackrel{=}{\rightarrow} R \twoheadleftarrow Q$ of $R$, where $Q = k[X,Y,Z]/(X^2,XY,Y^2)$. Since $\pd_Q(R/J)-\pd_Q(R) = \pd_Q(Q/(z)) - \pd_Q(Q/(z^2)) = 1-1 = 0$, it follows that $\cid_R(R/J) = 0 < \infty$.
\end{example}

Theorem~\ref{thm:int-closed-ideal-max-cx-curv} and Corollary~\ref{cor:int-closed-ideal-max-cx-curv} do not necessarily hold true for arbitrary integrally closed ideals which are not $\fm$-primary.

\begin{example}\label{exam-2:counter-corollaries}
	Let $R=k[[x,y,z,w]]/(xy,yz,zx,w^2)$ over a field $k$. Set $J:=(w)$. Then $J$ is an integrally closed ideal of $R$, but not $\fm$-primary. Here $R$ is a CM local ring of dimension $1$, but not complete intersection. With the similar arguments as in Example~\ref{exam-1:counter-corollaries}, one obtains that $J$ does not have maximal complexity and curvature. Moreover, $\cid_R(R/J) = 0 < \infty$.
\end{example}

Next we observe that the condition `$\fm$-primary' is not a necessary condition for an integrally closed ideal to have maximal complexity and curvature.

\begin{example}
	Let $R=k[[x,y,z]]/(xy,yz,zx)$ over a field $k$. Set $J:=(x)$. Then $R$ is a CM local ring of dimension $1$, and the ideal $J$ is integrally closed, but not $\fm$-primary. Since the 1st syzygy module of $(x)$ is given by $\Omega^R_1((x)) = (y,z) = (y) \oplus (z)$, inductively, one derives that $\beta_n^R(J) = \mu(\Omega^R_n(J)) = 2^n$ for all $n\ge 0$. Therefore $\cx_R(J) = \infty$ and $\curv_R(J) = 2$. Since $\fm = (x,y,z) = (x) \oplus (y) \oplus (z) $, it follows that $\beta_n^R(\fm) = 3 \times 2^n$ for all $n\ge 0$. Hence $\cx_R(\fm) = \infty$ and $\curv_R(\fm) = 2$. Thus $J$ has maximal complexity and curvature, though $J$ is not an $\fm$-primary ideal.
\end{example}

\section{Analogous results on injective and Gorenstein dimensions}\label{sec:injdim-G-dim}

The equivalence (1) $\Leftrightarrow$ (3) in the theorem below is due to \cite[6.12]{CGSZ18}. However, after using two classical results due to Roberts and Peskine-Szpiro, the proof of the following theorem is easy and short compare to that of \cite[6.12]{CGSZ18}.

\begin{theorem}\label{thm:int-closed-ideal-finite-injdim}
	With {\rm Setup~\ref{setup}}, the following three conditions are equivalent:\\
	{\rm (1)} $R$ is regular, {\rm (2)} $\injdim_R(R/I) < \infty$, and {\rm (3)} $\injdim_R(I) < \infty$.
%	\begin{enumerate}[{\rm (1)}]
%		\item $R$ is regular.
%		\item $\injdim_R(R/I) < \infty$.
%		\item $\injdim_R(I) < \infty$.
%	\end{enumerate}
\end{theorem}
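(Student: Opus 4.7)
My plan is to mimic the strategy of Theorem~\ref{thm:int-closed-ideal-max-cx-curv}, replacing the Betti-number analysis by an injective-dimension analysis based on Rees's change-of-ring theorem. The direction (1) $\Rightarrow$ (2), (3) is immediate: a regular local ring has finite global dimension and is Gorenstein, so every finitely generated $R$-module has finite projective, hence finite injective, dimension.

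For the converses, I first invoke Bass's theorem to conclude that $R$ is Cohen--Macaulay under either (2) or (3). If $\dim R = 0$, then $R$ is Artinian and the Bass formula gives $\injdim_R(I) = 0$ (resp.\ $\injdim_R(R/I) = 0$), making $I$ (resp.\ $R/I$) injective. In case (3), $I \hookrightarrow R$ splits; since the local ring $R$ is indecomposable as an $R$-module, $I = 0$ or $I = R$. Integral closedness excludes $I = 0$ in a nonfield Artinian local ring (there the integral closure of $0$ equals the nilradical $\fm$), and properness excludes $I = R$, leaving $R = k$. In case (2), $R/I$ is cyclic, indecomposable, and injective, so $R/I \cong E_R(k)$; a length count yields $\ell(R/I) = \ell(R)$, hence $I = 0$ and again $R = k$. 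We may therefore assume $\depth R \ge 1$; after passing to $R[Y]_{\fm R[Y]}$ to ensure an infinite residue field, Goto's theorem (as in the proof of Theorem~\ref{thm:int-closed-ideal-max-cx-curv}) provides $x \in \fm \setminus \fm^2$ regular on $R$ with $k$ a direct summand of $I/xI$ as an $R/xR$-module.

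For (3) $\Rightarrow$ (1), from $0 \to I \xrightarrow{x} I \to I/xI \to 0$ and the vanishing of $x$ on $\Ext^*_R(k,-)$, one obtains $\Ext^i_R(k, I/xI) \cong \Ext^i_R(k, I) \oplus \Ext^{i+1}_R(k, I)$, so $\injdim_R(I/xI) \le \injdim_R(I) < \infty$. Rees's theorem (using that $x$ is $R$-regular and annihilates $I/xI$) gives $\injdim_{R/xR}(I/xI) < \infty$; the $k$-summand then forces $\injdim_{R/xR}(k) < \infty$, whence $\pd_{R/xR}(k) < \infty$ (because $\Ext^i_{R/xR}(k,k)$ is nonzero for $i \le \pd_{R/xR}(k)$ but vanishes for $i > \injdim_{R/xR}(k)$), so $R/xR$ is regular by Auslander--Buchsbaum--Serre. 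Since $x \in \fm \setminus \fm^2$ is $R$-regular, $R$ itself is regular.

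The main obstacle is (2) $\Rightarrow$ (1), because $\injdim_R(R/I) < \infty$ does not directly bound $\injdim_R(I/xI)$. My plan is to tensor $0 \to I \to R \to R/I \to 0$ with $R/xR$ to obtain the short exact sequences of $R/xR$-modules
\[ 0 \to (I:_R x)/I \to I/xI \to (I+xR)/xR \to 0, \qquad 0 \to (I+xR)/xR \to R/xR \to R/(I+xR) \to 0, \]
and to propagate finiteness of $\injdim_{R/xR}$ through them. Under the $\fm$-full structure of $I$ (with the specific $x$ from Goto), the $k$-summand of $I/xI$ is pinned down by $(I:_R x)/I$, which is a submodule of $R/I$ annihilated by $x$. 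The plan is to use $\injdim_R(R/I) < \infty$, Rees applied to $R/(I+xR) = R/I \otimes_R R/xR$, and the second sequence to bound $\injdim_{R/xR}((I+xR)/xR)$, and then to exploit the integrally-closed hypothesis to bound $\injdim_{R/xR}((I:_R x)/I)$, which together force $\injdim_{R/xR}(I/xI) < \infty$ and hence $\injdim_{R/xR}(k) < \infty$, finishing as in case (3). The delicate point---and the main difficulty---is transferring the finiteness of injective dimension from $R/I$ to the subquotient $(I:_R x)/I$ despite the fact that injective dimension does not descend to submodules in general; this is precisely where the integrally-closed (equivalently, $\fm$-full) hypothesis must be used in an essential way.
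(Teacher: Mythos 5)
Your treatment of (1) $\Rightarrow$ (2),(3) and of (3) $\Rightarrow$ (1) is essentially correct and follows the paper's route: reduce to $\depth R\ge 1$ (Bass/Roberts for Cohen--Macaulayness, a separate Artinian argument), take Goto's element $x\in\fm\smallsetminus\fm^2$ with $k$ a direct summand of $I/xI$, and use the change-of-rings theorem for injective dimension modulo the regular element $x$ to force $\injdim_{R/xR}(k)<\infty$, hence regularity of $R/xR$ and of $R$. (Your detour through the split long exact sequence of $\Ext_R(k,-)$ is a harmless variant of citing \cite[3.1.15]{BH93} directly.)

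The genuine gap is implication (2). What you offer there is not a proof but a plan, and you yourself flag the unresolved step: bounding $\injdim_{R/xR}\big((I:_R x)/I\big)$, i.e., transferring finiteness of injective dimension from $R/I$ to a subquotient, which injective dimension does not permit in general. Worse, the step ``Rees applied to $R/(I+xR)$'' is not available: since $I$ is $\fm$-primary, $\depth_R(R/I)=0$, so $x$ (indeed every element of $\fm$) is a zerodivisor on $R/I$, and the change-of-rings theorem you invoke requires $x$ to be regular on the module as well as on $R$. So both links in the chain that is supposed to give $\injdim_{R/xR}(I/xI)<\infty$ are missing, and it is not clear the strategy can be repaired. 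The paper avoids all of this with one classical input: by Peskine--Szpiro \cite[Chapitre II, Th\'eor\`eme (5.5)]{PS73}, the existence of a nonzero cyclic module of finite injective dimension (here $R/I\neq 0$) already forces $R$ to be Gorenstein, so $\injdim_R(R)<\infty$, and then the exact sequence $0\to I\to R\to R/I\to 0$ gives $\injdim_R(I)<\infty$; thus (2) $\Rightarrow$ (3), and your completed (3) $\Rightarrow$ (1) finishes the proof. You should replace your sketch for (2) by this (or an equivalent) reduction.
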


\begin{proof}
	The implications (1) $\Rightarrow$ (2) and (1) $\Rightarrow$ (3) are well known.
	
	(2) $\Rightarrow$ (3): Let $\injdim_R(R/I) < \infty$. By a result of Peskine and Szpiro \cite[Chapitre II, Th\'{e}or\`{e}me (5.5)]{PS73}, as there is a nonzero cyclic $R$-module of finite injective dimension, $R$ is Gorenstein, i.e., $\injdim_R(R) < \infty$. Then, from the exact sequence $0 \to I  \to R \to R/I \to 0$, it follows that $\injdim_R(I) < \infty$.
	
	(3) $\Rightarrow$ (1): Let $\injdim_R(I) < \infty$. Since $I$ is $\fm$-primary, $\fm^n \subseteq I$ for some $n \ge 1$. If $I=0$, then $\fm^n = 0$, hence $\fm \subseteq \overline{I} = I$, i.e., $\fm = I = 0$, which implies that $R$ is a field. So we may assume that $I \neq 0$. The existence of a nonzero (finitely generated) $R$-module of finite injective dimension ensures that $R$ is Cohen-Macaulay (see, e.g., \cite[9.6.2, 9.6.4(ii)]{BH93} and \cite{Rob87}). We claim that $R$ cannot be Artinian. Otherwise, if $R$ is Artinian, then $\fm^u =0$ for some $u \ge 1$. As above, one gets that $\fm = I \neq 0$, and $\injdim_R(\fm) < \infty$, which implies that $R$ is regular (cf.~\cite[Thm.~3.7]{GGP18}),	hence $I = \fm = 0$, a contradiction. So $\depth(R)=\dim(R) \ge 1$. Then, as in the proof of Theorem~\ref{thm:int-closed-ideal-max-cx-curv}, there is an $R$-regular element $x \in \fm \smallsetminus \fm^2$ such that $k$ is a direct summand of $I/xI$ as an $R/xR$-module. It follows that
	\[
		\injdim_{R/xR}(k) \le \injdim_{R/xR}(I/xI) = \injdim_R(I) - 1 < \infty,
	\]
	see, e.g., \cite[3.1.15]{BH93}. Therefore $R/xR$ is regular, and hence $R$ is regular.
\end{proof}

A similar method as in the proof of Theorem~\ref{thm:int-closed-ideal-finite-injdim} yields a simple and elementary proof of the analogous result on Gorenstein dimension.

\begin{theorem}\cite[Thm.~1.1]{CS16}\label{thm:G-dim-of-I}
	With {\rm Setup~\ref{setup}},
	\begin{center}
		$R$ is Gorenstein $\Longleftrightarrow$ $\gdim_R(I) < \infty$ $($equivalently, $\gdim_R(R/I) < \infty$$)$.
	\end{center}
\end{theorem}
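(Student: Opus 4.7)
The plan is to mimic the proof of Theorem~\ref{thm:int-closed-ideal-finite-injdim} almost step for step, replacing injective dimension by Gorenstein dimension throughout. First I would handle the easy implications: if $R$ is Gorenstein, then every finitely generated $R$-module has finite Gorenstein dimension, so $\gdim_R(I)$ and $\gdim_R(R/I)$ are both finite. Next, from the short exact sequence $0\to I\to R\to R/I\to 0$, the equality $\gdim_R(R)=0$, and the standard two-out-of-three principle for Gorenstein dimension in a short exact sequence, one obtains that $\gdim_R(I)<\infty$ \iff $\gdim_R(R/I)<\infty$, so the two ``equivalent'' formulations of the theorem really are equivalent.

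For the main converse, I assume $\gdim_R(I)<\infty$ (and hence also $\gdim_R(R/I)<\infty$). If $\depth(R)=0$, I would replace $R$ by $S:=R[X]_{\langle\fm,X\rangle}$ and $I$ by $IS+XS$. Indeed, by Remark~\ref{rmk:Lemma-G-dim}, $\gdim_S(IS+XS)=\max\{\gdim_R(I),\gdim_R(R/I)\}<\infty$; by Lemma~\ref{lem:S-R[[X]]}(4), $IS+XS$ is integrally closed and $\fn$-primary in $S$; and $R$ is Gorenstein \iff $S$ is. Since $\depth(S)=\depth(R)+1$, this reduces the problem to the case $\depth(R)\ge 1$. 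A further faithfully flat extension through $R[X]_{\fm R[X]}$, as in the proof of Theorem~\ref{thm:int-closed-ideal-max-cx-curv}, lets me also assume the residue field $k$ is infinite.

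Under these hypotheses, Goto's theorem produces a nonzerodivisor $x\in\fm\smallsetminus\fm^2$ with $(\fm I:_R x)=I$, whence $k$ is a direct summand of $I/xI$ as an $R/xR$-module. Because $x$ is regular on both $R$ and $I$, the change-of-rings formula \cite[1.4.5]{Cr00} yields $\gdim_{R/xR}(I/xI)=\gdim_R(I)<\infty$, and since the Gorenstein dimension of a direct summand is bounded above by that of the ambient module, $\gdim_{R/xR}(k)<\infty$. The Auslander-Bridger analogue of the Auslander-Buchsbaum-Serre theorem then forces $R/xR$ to be Gorenstein, and since $x$ is $R$-regular, $R$ itself is Gorenstein.

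The only ingredients beyond what the paper has already invoked are classical Auslander-Bridger facts about Gorenstein dimension, all recorded in Christensen's monograph: two-out-of-three in short exact sequences, the change-of-rings formula under an element regular on $R$ and $M$, sub-additivity with respect to direct summands, and the characterization $R$ Gorenstein \iff $\gdim_R(k)<\infty$. I do not anticipate any substantive obstacle; the argument is essentially a direct transcription of the injective-dimension case, with the role of the Peskine-Szpiro Bass conjecture replaced by the G-dim analogue of Auslander-Buchsbaum-Serre.
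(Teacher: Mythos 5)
Your proposal is correct and follows essentially the same route as the paper: reduce to $\depth(R)>0$ via Lemma~\ref{lem:S-R[[X]]} and Remark~\ref{rmk:Lemma-G-dim} (and to infinite residue field), use Goto's $\fm$-fullness to realize $k$ as a direct summand of $I/xI$ over $R/xR$, apply the change-of-rings formula \cite[1.4.5]{Cr00} together with the direct-summand bound, and conclude via finiteness of the Gorenstein dimension of the residue field. The only cosmetic differences are that you make the two-out-of-three argument for $\gdim_R(I)<\infty \Leftrightarrow \gdim_R(R/I)<\infty$ explicit and finish with ``$R/xR$ Gorenstein and $x$ regular $\Rightarrow$ $R$ Gorenstein'' instead of the paper's equality $\gdim_R(k)-1=\gdim_{R/xR}(k)$ from \cite[(1.5.2)]{Cr00}.
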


\begin{proof}
	The implication $(\Rightarrow)$ is well known, cf.~\cite[1.4.9]{Cr00}. For the reverse implication, we may assume that $I\neq 0$ (as in the proof of Theorem~\ref{thm:int-closed-ideal-finite-injdim}), and $\depth(R)>0$ (by Lemma~\ref{lem:S-R[[X]]} and Remark~\ref{rmk:Lemma-G-dim}). Furthermore, we may assume that $k$ is infinite. Therefore, by \ref{para:m-full-ideals-properties}.\eqref{k-summand-of-I-mod-x}, there is an $R$-regular element $x \in \fm \smallsetminus \fm^2$ such that $k$ is a direct summand of $I/xI$ as an $R/xR$-module. Hence
	\begin{align*}
	\gdim_R(k) - 1 &= \gdim_{R/xR}(k) \quad \mbox{[by \cite[(1.5.2)]{Cr00}]}\\
	& \le \gdim_{R/xR}(I/xI) \quad \mbox{[by \cite[1.2.7 and 1.2.9]{Cr00}]}\\
	& = \gdim_R(I)  \quad \mbox{[by \cite[1.4.5]{Cr00}]}.
	\end{align*}
	Thus, if $\gdim_R(I) < \infty$, then $\gdim_R(k) < \infty$, hence $R$ is Gorenstein.
\end{proof}
%
%\begin{question}
%	Let $I$ be an integrally closed $\fm$-primary ideal of a Noetherian local ring $R$.
%	\begin{enumerate}
%		\item If $\gid_R(R/I)<\infty$, then is $R$ Gorenstein?
%		\item[] It is known that if $\gid_R(I)<\infty$, then $R$ is Gorenstein (\cite[7.6]{ZCGS18}).
%		\item Does $I$ have maximal injective complexity?%{\color{blue}I feel that it is difficult to answer.}
%	\end{enumerate}
%\end{question}

\section*{Acknowledgments}
Ghosh was supported by Start-up Research Grant (SRG) from SERB, DST, Govt.~of India with the Grant No SRG/2020/000597.

\end{document}